\newtheorem{theorem}{Theorem}[section]
\newtheorem{lemma}[theorem]{Lemma}
\newtheorem{definition}[theorem]{Definition}
\newtheorem{corollary}[theorem]{Corollary}
\title{Partial Graph Orientations and the Tutte Polynomial}
\author{Spencer Backman}
\keywords{Partial graph orientation, Tutte polynomial, Potts model, $V$-polynomial cycle-cocycle reversal system, Lawrence ideal, reliability polynomial. }
\def\O{\mathcal{O}}
\begin{document}
\maketitle

\bibliographystyle{plain}

\begin{abstract}

Gessel and Sagan \cite{gessel1996tutte} investigated the Tutte polynomial, $T(x,y)$ using depth first search, and applied their techniques to show that the number of acyclic partial orientations of a graph is $2^gT(3,1/2)$.  We provide a short deletion-contraction proof of this result and demonstrate that dually, the number of strongly connected partial orientations is $2^{n-1}T(1/2,3)$.  We then prove that the number of partial orientations modulo cycle reversals is $2^gT(3,1)$ and the number of partial orientations modulo cut reversals is $2^{n-1}T(1,3)$.  To prove these results, we introduce cut and cycle minimal partial orientations which provide distinguished representatives for partial orientations modulo cut and cycle reversals.  These extend classes of total orientations introduced by Gioan \cite{gioan2007enumerating}, and Greene and Zaslavksy \cite{greene1983interpretation}, and we highlight a close connection with graphic and cographic Lawrence ideals.  We conclude with edge chromatic generalizations of the quantities presented, which allow for a new interpretation of the reliability polynomial for all probabilities, $p$ with $0 < p <1/2$.

\end{abstract}

\maketitle

\section{Introduction}
The Tutte polynomial is the most general bivariate polynomial which can be defined using deletion-contraction, and there are several known evaluations of the Tutte polynomial which count certain families of graph orientations.  In particular, Stanley \cite{stanley1973acyclic} showed that $T(2,0)$  counts the number of acyclic orientations of a graph  and Las Vergnas  \cite{las1980convexity} showed that $T(0,2)$ counts the number of strongly connected orientations of a graph.  These two facts are dual for planar graphs because acyclic orientations of a plane graph induce strongly connected orientations of its dual, while the Tutte polynomial of the dual graph is obtained by interchanging the variables.  Gessel and Sagan \cite{gessel1996tutte} investigated the Tutte polynomial using depth first search and applied their techniques to show the number of acyclic partial orientations of a graph is $2^gT(3,1/2)$.  We offer a short proof of this fact using deletion-contraction and demonstrate for the first time that the number of strongly connected partial orientations of a graph is $2^{|V|-1}T(1/2,3)$.

Gioan \cite{gioan2007enumerating} presented a unified framework for understanding the orientation based interpretations of the integer evaluations of $T(x,y)$ for $0 \leq x, y \leq 2$ using directed cut reversals, directed cycle reversals, and a convolution formula for the Tutte polynomial in terms of the cyclic flats of a graph.  In the process, Gioan introduced the notion of orientations with a quasi sink which give distinguished representatives for orientations modulo cut reversals and are counted by $T(1,2)$.  On the other hand, it was shown by Stanley \cite{stanley1980decompositions} that the indegree sequences of full orientations are counted by $T(2,1)$, and another interpretation of this quantity was given by Gioan as the number of full orientations modulo cycle reversals.  This quantity has yet another interpretation as the set of full orientations such that the cyclic part is minimal in the sense of Greene and Zaslavsky \cite[Corollary 8.2]{greene1983interpretation} or Bernardi \cite{bernardi2008tutte}, as these give distinguished representatives for the set of full orientations modulo cycle reversals.

Using a total order on the edges and a fixed reference orientation of our graph, we introduce cut minimal and cycle minimal partial orientations which extend Gioan's $q$-connected orientations and Greene and Zaslavsky's minimal orientations respectively.  Cut minimal partial orientations are more matroidal than Gioan's $q$-connected orientations, even in the case of full orientations, as they do not require the notion of a vertex.  The cut minimal partial orientations give unique representatives for the equivalence classes of partial orientations modulo cut reversals which we prove are enumerated by $2^{|V|-1}T(1,3)$.  Similarly, the cycle minimal partial orientations give distinguished representatives for the equivalence classes of partial orientations modulo cycle reversals which we prove are enumerated by $2^gT(3,1)$.

In \cite{backman2014riemann}, the author generalized Gioan's cycle reversal, cocycle reversal, and cycle-cocycle reversal systems to partial orientations by the addition of {\it edge pivots}.     It was demonstrated that two partial orientations have the same indegree sequence if and only if they are equivalent by cycle reversals and edge pivots.  We strengthen this result and introduce cycle-path minimal partial orientations which give distinguished representatives for the set of partial orientations with a fixed indegree sequence. 

Lawrence ideals are certain binomial ideals associated to lattices, and in \cite{drton2009lectures, mohammadi2013divisors, kateri2014family} the Lawrence ideals associated to the cut and cycle lattices were investigated in the context of combinatorial commutative algebra and algebraic statistics.  These ideals have distinguished Gr\"obner bases encoding cut reversals and cycle reversals, and we explain how these objects lend themselves to alternate proofs of existence and uniqueness for cut minimal and cycle minimal partial orientations.  

We conclude by describing {\it edge chromatic} generalizations of the objects counted in this paper where each oriented edge takes one of $k$ colors and each unoriented edge takes one of $l$ colors.  We give formulas for the number of such objects using the Tutte polynomial and we apply $(k,l)$-chromatic cut minimal partial orientations to give a new interpretation of the reliability polynomial for all probabilities between 0 and $1/2$.  

Our enumerative results follow from weighted deletion-contraction relations which seem to appear naturally in the context of the Potts model from physics, e.g. \cite{sokal2005multivariate}, and the $V$-polynomial from knot theory \cite{noble1999weighted}.  In \cite{sokal2005multivariate} we find that Sokal writes ``Let me conclude by observing that numerous specific evaluations of the Tutte
polynomial have been given combinatorial interpretations, as counting some set of
objects associated to the graph $G$ ...  It would be an interesting
project to seek to extend these counting problems to ``counting with weights''...''  Perhaps this paper is a step in the direction which Sokal envisioned.

\section{Acyclic and Strongly Connected Partial Orientations}\label{secacyclic}

All graphs considered in this paper will be undirected and connected although they may have multiple edges or loops.  We set $m = |E(G)|$ and $n=|V(G)|$.  The {\it genus} of a graph, $g$ is the quantity $m-n+1$.    A partial orientation of a graph is an orientation of a subset of the edges, and we say that the remaining edges are unoriented.  A partial orientation is {\it acyclic} if it contains no directed cycles, and {\it strongly connected} if it does not contain any directed cuts. A strongly connected full orientation of a graph is most commonly defined as containing a directed path between each ordered pair of vertices, but this is equivalent to the condition that the orientation does not contain any directed cuts.  An acyclic partial orientation of a loopless graph is one which can be extended to a full acyclic orientation, and a strongly connected partial orientation of a bridgeless graph is one which can be extended to a strongly connected full orientation \cite{farzad2006forced}.  Moreover, these objects are dual for plane graphs.

     \begin{figure}[h] \label{AS}
\centering
\includegraphics[height=5cm]{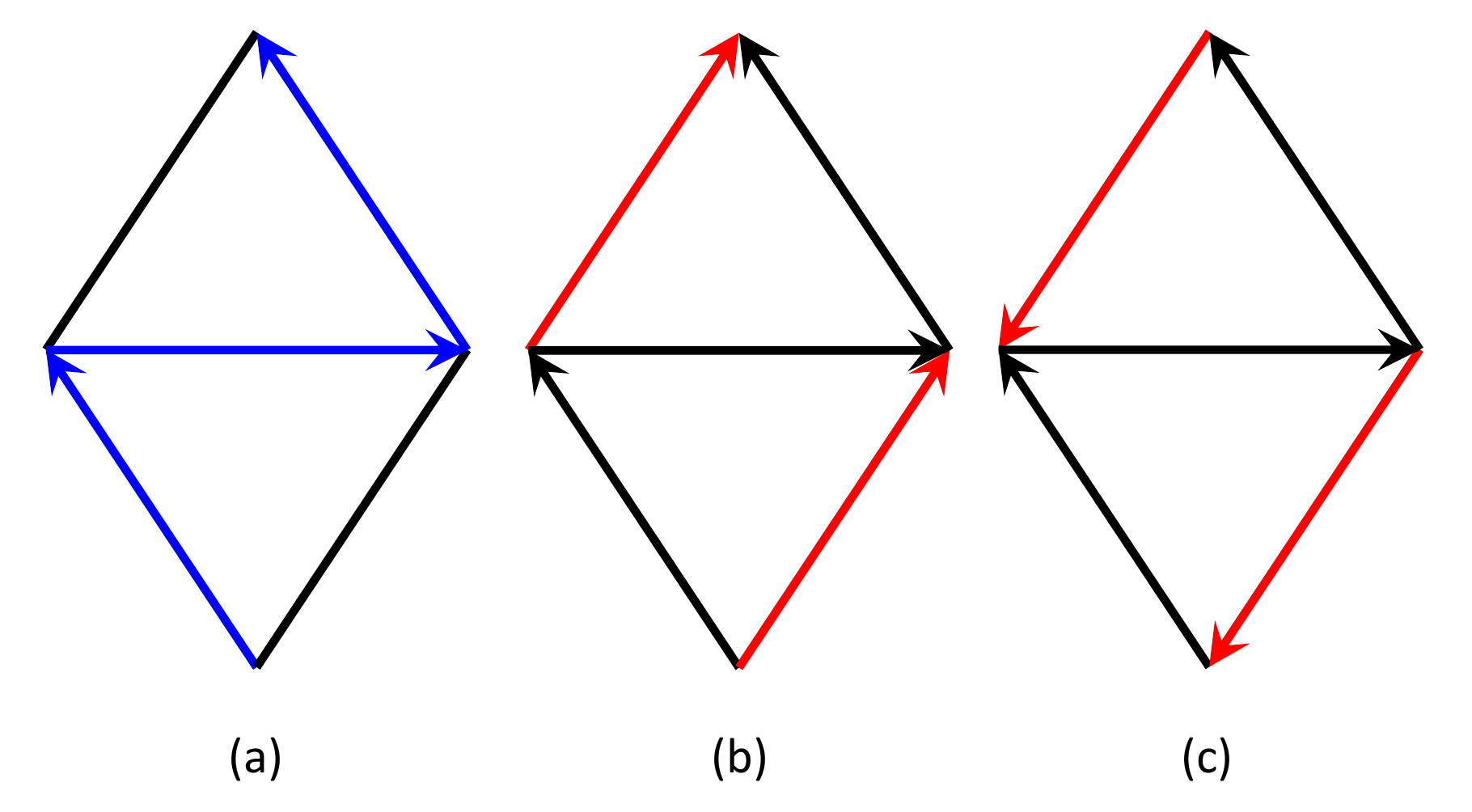}
\caption{ (a) A partial orientation which is both acyclic and strongly connected, (b) an acyclic full orientation obtained by orienting the remaining unoriented edges, and (c) a strongly connected full orientation obtained by orienting the remaining unoriented edges.}
\end{figure}

\begin{theorem}\label{acyclic}
The number of acyclic partial orientations of a graph is $2^gT(3,1/2)$.
\end{theorem}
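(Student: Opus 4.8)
The plan is to prove this by deletion-contraction on a carefully chosen edge, verifying that both sides of the identity satisfy the same recursion together with a matching base case. The Tutte polynomial itself satisfies $T(G) = T(G/e) + T(G\setminus e)$ for an edge $e$ that is neither a loop nor a bridge, with the boundary rules $T(G) = x\,T(G/e)$ when $e$ is a bridge and $T(G) = y\,T(G\setminus e)$ when $e$ is a loop. So I would set $a(G) := 2^{g}\,T(3,1/2)$ and try to show that the number of acyclic partial orientations, call it $A(G)$, obeys the corresponding weighted recursion. The subtlety is that the prefactor $2^{g}$ changes under deletion and contraction: contracting an ordinary edge leaves $g$ unchanged (both $m$ and $n$ drop by one), whereas deleting an ordinary edge drops $g$ by one (only $m$ drops). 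I would track these exponents carefully so that the weighted count $A(G)$ and $a(G)$ transform identically.

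First I would fix an edge $e$ that is not a loop and not a bridge and partition the acyclic partial orientations of $G$ according to the status of $e$: it is either unoriented, oriented one way, or oriented the other way. The key combinatorial step is to match these three possibilities against acyclic partial orientations of $G/e$ and $G\setminus e$. An acyclic partial orientation in which $e$ is unoriented should correspond to an acyclic partial orientation of $G\setminus e$, since leaving $e$ out cannot create or destroy a directed cycle through $e$. For the two orientations of $e$, I expect that an orientation of $e$ together with an acyclic completion of the rest corresponds naturally to an acyclic partial orientation of $G/e$, where contracting $e$ records the constraint imposed by its chosen direction; the two directions should account for a factor reflecting the two ways of reinserting $e$. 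The precise bookkeeping must produce the relation $A(G) = A(G\setminus e) + 2\,A(G/e)$ (or whatever constant the factor $2^{g}$ forces), and I would check that $a(G) = 2^{g}T(3,1/2)$ satisfies exactly this same relation given the genus shifts above.

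The main obstacle, and the place I would spend the most care, is the loop and bridge base cases, because loops and multiple edges interact nontrivially with the acyclicity condition and with the factor $2^{g}$. A loop can never be oriented in an acyclic partial orientation (orienting a loop creates a directed cycle of length one), so a loop has exactly two states, unoriented or—no—only the unoriented state plus... I must check whether the convention counts an oriented loop as a directed cycle; if so, a loop contributes a factor that I would match against $g$ increasing by one under the presence of a loop and against the $y=1/2$ evaluation. A bridge, by contrast, can be oriented either way or left unoriented, since it lies in no cycle, giving three states and matching the $x=3$ evaluation. Confirming that the single-edge and small graphs (a single loop, a single bridge, a single vertex) give $A(G) = 2^{g}T(3,1/2)$ establishes the base of the induction.

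The remaining step is to handle multigraphs and ensure well-definedness: I would verify that the recursion is independent of the choice of the non-loop, non-bridge edge $e$, which follows automatically once both $A(G)$ and $a(G)$ are shown to satisfy the same deletion-contraction relation and agree on all graphs consisting solely of loops and bridges. The whole argument is short precisely because the heavy lifting is done by the universality of deletion-contraction; the genuine content is the three-way-to-two-way correspondence on the status of $e$ and the correct matching of the loop and bridge weights to the evaluation $T(3,1/2)$ scaled by $2^{g}$.
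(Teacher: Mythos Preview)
Your overall strategy (deletion--contraction plus matching base cases) is the paper's, but the combinatorial core of your plan is wrong in a way that makes the recursion come out with the coefficients swapped.

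From your own genus bookkeeping, $a(G)=2^{g}T_G(3,1/2)$ satisfies
\[
a(G)=2\,a(G\setminus e)+a(G/e),
\]
not $a(G)=a(G\setminus e)+2\,a(G/e)$: contraction keeps $g$ fixed and deletion drops it by one, so the factor of $2$ lands on the \emph{deletion} term. Your proposed three-way matching (unoriented $e\leftrightarrow G\setminus e$, the two orientations of $e\leftrightarrow$ two copies of $G/e$) does not produce this. The first half is fine, but the second half fails: contracting an oriented edge need not preserve acyclicity. If $e=uv$ is oriented $u\to v$ and there is another directed $u\to v$ path in the rest of the orientation, this is perfectly acyclic in $G$, but in $G/e$ that path becomes a directed cycle. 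Concretely, on a triangle your recursion gives $9+2\cdot 7=23$, whereas the correct count is $25=2\cdot 9+7$.

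The fix, and what the paper does, is to run the bijection from the other side: start with an acyclic partial orientation of $G\setminus e$ and count extensions to $G$. Leaving $e$ unoriented is always acyclic, and at least one orientation of $e$ is acyclic (if both created a cycle there would be directed paths $u\to v$ and $v\to u$, contradicting acyclicity of the restriction). So every acyclic partial orientation of $G\setminus e$ extends in at least two ways, giving the $2\,f(G\setminus e)$ term. The third extension (both orientations of $e$ acyclic) is available precisely when there is no directed path between $u$ and $v$ in either direction, which is exactly the condition that the induced partial orientation of $G/e$ is acyclic; this gives the $f(G/e)$ term. Your loop and bridge analysis is essentially right once you align it with this recursion.
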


\begin{proof}
Let $f(G)$ be the number of acyclic partial orientations of $G$.  Assuming that $e$ is not a loop, we claim that $f(G) = 2f(G \setminus e) + f(G/e).$  Given a partial orientation of $G\setminus e$ we can extend this to a partial orientation of $G$ in at least two ways:  we can leave $e$ unoriented or we can orient it in one of the two possible directions.  Suppose that both orientations of $e$ cause a cycle to appear.  This is true if and only if there is a directed path from $u$ to $v$ and a directed path from $v$ to $u$, contradicting the acyclicity of $\O$.  If there is no path from $u$ to $v$ or $v$ to $u$ then both orientations preserve acyclicity and this occurs precisely when the orientation obtained by contracting $e$ is acyclic, hence the relation above.  Now we need to check that the proposed function also satisfies this relationship:  $2^{g(G)}T_G(3,1/2) = 2 \cdot 2^{g(G \setminus e)}T_{G\setminus e}(3,1/2) + 2^{g(G/e)}T_{G / e}(3,1/2)$.  If $e$ is a bridge then $f(G) = 3f(G / e)$, and if $e$ is a loop, $f(G) = 2\cdot 1/2f(G/e)$.

\end{proof}

\begin{theorem}\label{strong}
The number of strongly connected partial orientations of a graph is $2^{n-1}T(1/2,3)$.  
\end{theorem}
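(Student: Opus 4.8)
The plan is to mirror the proof of Theorem~\ref{acyclic} via a deletion-contraction recursion, exploiting the duality between acyclic and strongly connected partial orientations. Strongly connected partial orientations are defined by the absence of directed cuts, which is the notion dual to the absence of directed cycles, so I expect the recursion to have deletion and contraction swapped relative to the acyclic case, and the Tutte variables interchanged.

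Let me think about the recursion carefully. Let $h(G)$ denote the number of strongly connected partial orientations of $G$. Fix an edge $e$ that is not a bridge. Given a partial orientation of $G/e$ (where $e$ is contracted), I would extend it across $e$ in one of three ways—leaving $e$ unoriented or orienting it either way—and ask when strong connectedness (no directed cut) is preserved. The dual statement to the acyclic analysis is: both orientations of $e$ create a directed cut exactly when orienting $e$ in each direction forces a directed cut through $e$, which corresponds to $e$ being a bridge-like obstruction in the deleted graph $G\setminus e$. The claimed relation I would aim to verify is $h(G) = 2h(G/e) + h(G\setminus e)$, dual to $f(G)=2f(G\setminus e)+f(G/e)$, together with the boundary cases: if $e$ is a loop then $h(G)=3h(G\setminus e)$, and if $e$ is a bridge then $h(G) = 2\cdot\tfrac12\, h(G\setminus e) = h(G\setminus e)$, reflecting that a bridge must be oriented to avoid creating a directed cut but contributes the factor matching $T(1/2,3)$ at a bridge.

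The key steps, in order, are: first, establish the combinatorial trichotomy for extending a strongly connected partial orientation across a non-bridge edge $e$, showing that the three extensions collapse to exactly two surviving strongly connected orientations precisely when the corresponding orientation of $G/e$ is strongly connected, and that all three survive precisely when leaving $e$ unoriented in $G\setminus e$ yields a strongly connected orientation. Second, verify that the proposed closed form $2^{n-1}T(1/2,3)$ satisfies the same recursion, i.e. $2^{n(G)-1}T_G(1/2,3) = 2\cdot 2^{n(G/e)-1}T_{G/e}(1/2,3) + 2^{n(G\setminus e)-1}T_{G\setminus e}(1/2,3)$, checking that the vertex counts $n(G/e)=n(G)-1$ and $n(G\setminus e)=n(G)$ make the exponents of $2$ align with the Tutte recursion $T_G = T_{G\setminus e}+T_{G/e}$. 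Third, handle the base cases of a single loop and a single bridge directly.

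The main obstacle will be the first step: correctly identifying when orienting $e$ in a given direction creates a directed cut, since this requires the dual of the path-existence argument used in the acyclic case. In the acyclic proof, the relevant object is a directed path between the endpoints of $e$; here the relevant object is a directed cut separating the two sides across $e$, and I must phrase this in a way that is clean under contraction. I anticipate the cleanest route is to invoke the duality stated in the text—namely that for plane graphs these objects are exactly dual, and that the defining ``no directed cut'' condition is the exact combinatorial dual of ``no directed cycle''—so that the entire acyclic argument transports formally by interchanging deletion and contraction, cycles and cuts, and the Tutte variables $x$ and $y$. Making this duality argument rigorous for all graphs, not merely planar ones, by arguing directly at the level of directed cuts rather than appealing to planar duality, is where the care is needed.
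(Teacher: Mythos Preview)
Your approach is the same deletion--contraction argument the paper uses: you correctly identify the recursion $h(G)=2h(G/e)+h(G\setminus e)$ for $e$ not a bridge, and you correctly pinpoint the crux as showing that both orientations of $e$ cannot simultaneously create directed cuts. The paper dispatches that crux with a short direct argument rather than planar duality: if orienting $e$ one way creates the directed cut $(X,X^c)$ and the other way creates $(Y,Y^c)$, then one of $(X\cap Y,\,X^c\cup Y^c)$ or $(X\cap Y^c,\,X^c\cup Y)$ is already a directed cut avoiding $e$, contradicting strong connectedness of the starting orientation. You should supply this (or an equivalent) argument rather than leave it as an anticipated obstacle.

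Your bridge base case, however, is wrong. A bridge $e$ is itself a one-edge cut, so orienting it in either direction produces a directed cut; a bridge must therefore be left \emph{unoriented}, not ``must be oriented'' as you wrote. Consequently the correct relation is $h(G)=h(G/e)$, matching $2^{n(G)-1}T_G(1/2,3)=2\cdot\tfrac12\cdot 2^{n(G/e)-1}T_{G/e}(1/2,3)$, not $h(G)=h(G\setminus e)$. (Indeed $G\setminus e$ is disconnected when $e$ is a bridge, so $h(G\setminus e)$ is not even the right object.) Fix this, fill in the cut-intersection lemma above, and your outline becomes the paper's proof.
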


\begin{proof}
Let $f(G)$ be the number of strongly connected partial orientations of $G$.  Assuming that $e$ is not a bridge, we claim that $f(G) = f(G \setminus e) + 2f(G/e).$  Given a partial orientation of $G\setminus e$ we can extend this to a strongly connected partial orientation of $G$ in at least two ways:  we can leave $e$ unoriented or we can orient it in at least one of the two directions.  Suppose that both orientations of e cause a directed cut to appear.  Let $(X,X^c)$ and $(Y,Y^c)$ be the two associated cuts.  It easy each to check that either $(X\cap Y, X^c \cup Y^c)$ or $(X\cap Y^c, X^c\cup Y)$ is already consistently oriented, a contradiction.  Both orientations of $e$ preserve strong connectedness if and only if the deletion of $e$ yields a strongly connected orientation, hence we obtain the desired relation.   Now we need to check that the proposed function satisfies the above recurrence:  $2^{|V(G)|-1}T_G(1/2,3) = 2^{|V(G\setminus e)|-1}T_{G \setminus e}(1/2,3)  + 2\cdot 2^{|V(G/e)|-1}T_{G / e}(1/2,3)$.
 Finally, if $G = e$ is a bridge  $f(G) = 2\cdot1/2f(G/e)$ and if $G = e$ is a loop, $f(G) = 3f(G \setminus e)$.
 \end{proof}
 
 We note that for plane graphs there is a duality relating the exponents appearing in front of the Tutte polynomial in the two previous theorems.  By Euler's formula, the genus $g$ of a plane graph is one less than the number of faces, which is one less than the number of vertices of the dual plane graph. 
 
In \cite{beck2011enumeration, beck2012weak} a different extension of Stanley's theorem is given.  Their setup is to start with a fixed acyclic partial orientation, also called an acyclic mixed graph, and count the number of ways this can be completed to a full acyclic orientation using an evaluation of a chromatic polynomial which they associate to the mixed graph.  Another pair of related papers is \cite{hopkins2011orientations, hopkins2012bigraphical}, where the authors use acyclic partial orientations for proving the $G$-Shi conjecture.  They prove that the regions of the bigraphical arrangement are in bijection with certain types of acyclic partial orientations which they call $A$-admissible.  They also prove that the number of regions of a generic bigraphical arrangement is $2^{n-1}T(3/2,1)$, a value which appears somewhat similar to the one obtained in Theorem \ref{acyclic}.

 \section{Cut Minimal, Cycle Minimal and Cycle-Path Minimal Partial Orientations}\label{secminimal}
 
In this section we are concerned with 3 different operations on partial graph orientations.  These operations are called {\it cut (cocycle) reversals}, {\it cycle reversals}, and {\it edge pivots}.  In a cut reversal, the edges in a directed cut are reversed.  Similarly, in a cycle reversal, the edges in a directed cycle are reversed.  Cut reversals and cycle reversals were introduced by Gioan \cite{gioan2008circuit} for full orientations, the former of which generalizes Mosesian's {\it pushing down} operation \cite{mosesian1972strongly} for acyclic orientations. Given a vertex $v$ incident to an unoriented edge $e$ and an edge $e'$ oriented towards $v$, an edge pivot is the operation of unorienting $e'$ and orienting $e$ towards $v$. The notion of an edge pivot was introduced by the author in \cite{backman2014riemann} for the study of partial orientations.  See Figure \ref{pivotalcycle-cocyclenoshadow} for an illustration of these operations.  Given a partial orientation $\O$, we will refer to its indegree sequence $D_{\O}$ which is a vector whose $i$th entry is the number of edges oriented towards the $i$th vertex.  We remark that for the study of divisors on graph we take the indegree minus one, but the -1 is not important for the considerations of this paper, hence we omit it.

In what follows, we work with a pair $(<,A)$ where $<$ is a total order on the edges and $A$ is a full orientation of the graph.  We should consider $A$ as a {\it reference orientation} of the graph to which we can compare other partial orientations. 
 
     \begin{figure}[h] 
\centering
\includegraphics[height=10cm]{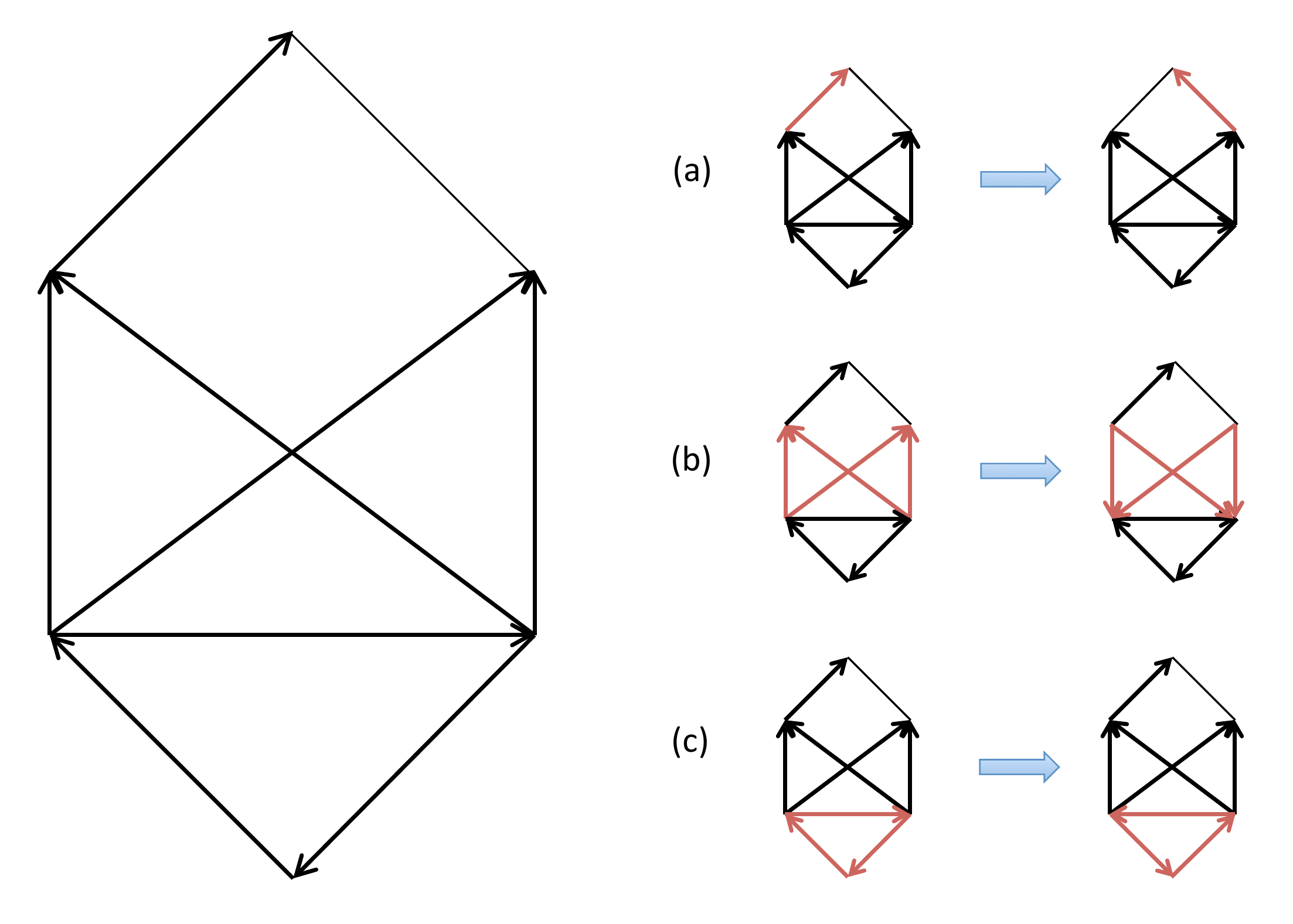}
\caption{ A partial orientation with (a) an edge pivot, (b) a cut (cocycle) reversal, and (c) a cycle reversal.}\label{pivotalcycle-cocyclenoshadow}
\end{figure}

 \begin{definition}
 A directed cut is minimal if its minimum edge is oriented in the same direction as in $A$, and nonminimal otherwise.  A partial orientation $\O$ of $G$ is {\it cut minimal} if every directed cut in $\O$ is minimal.
   \end{definition}
 
 We first explain how these objects generalize Gioan's orientations with a quasisink, known elsewhere as $q$-connected orientations or root connected orientations.  These are the full orientation such that there exists a vertex $q$ from which every other vertex is reachable by a directed path, and they generalize Mosesian's root connected acyclic orientation \cite{mosesian1972strongly} which are counted by $T(1,0)$, e.g. \cite{benson2010g}.  It is easy to see that this definition is equivalent to the statement that each directed cut is oriented away from $q$.  These orientations were rediscovered by An, Baker, Kuperberg and Shokrieh \cite{an2013canonical}, who observed that their associated divisors are the break divisors of Mikhalkin and Zharkov \cite{mikhalkin2006tropical} offset by a chip at $q$.
  To see how $q$-connected orientations can be realized as cut minimal orientation, take a spanning tree $T$ with all of the edges oriented away from $q$ and label them with with the integers from $1$ to $n-1$ so that each edge has a smaller label than it's descendants, e.g. by depth first search or breadth first search.  Extend this order and orientation of $T$ arbitrarily to a total order of the edges and full orientation of the graph $(<,A)$.  We claim that the $q$-connected orientations are precisely the cut minimal orientations with respect to $(<,A)$.  For any directed cut, the minimum edge in this cut is the smallest edge which also appears in $T$, therefore this cut is minimal if and only if it is oriented away from $q$.  We call any pair $(<,A)$ coming from such a construction, a {\it q-connected pair}.
 
     \begin{figure}[h] \label{pair}
\centering
\includegraphics[height=5cm]{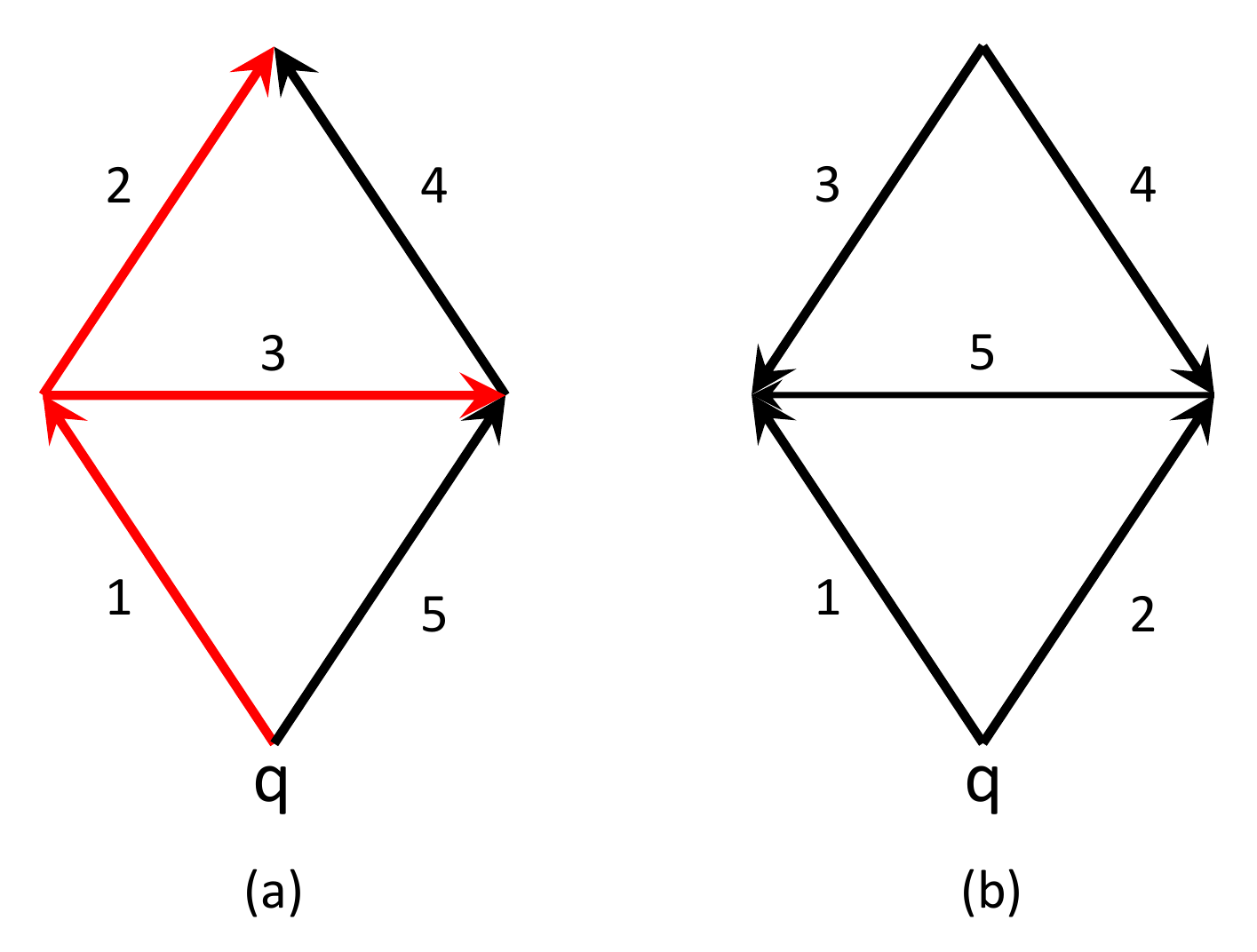}
\caption{ (a) A $q$-connected pair $(<_0,A_0)$ with a rooted spanning tree in red and (b) a pair $(<_1,A_1)$ which is not $q$-connected.}
\end{figure}

 \begin{theorem}\label{cut-min}
 Each partial orientation $\O$ is equivalent via cut reversals to a unique cut minimal orientation, which can be obtained greedily.
 \end{theorem}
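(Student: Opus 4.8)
The plan is to handle existence (including the greedy construction) and uniqueness separately, after one reduction. First I would record that a cut reversal only reverses the edges of a \emph{directed} cut, all of which are already oriented; hence the set of unoriented edges, and with it the set of oriented edges, is constant on each cut-reversal equivalence class. Contracting the unoriented edges therefore reduces the statement to the case of full orientations of the contracted graph, to which the total order $<$ and the reference orientation $A$ restrict. So I may assume $\O$ is a full orientation.

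For existence and the greedy claim, I would attach to each orientation the potential vector $v(\O)\in\{0,1\}^{E}$ with $v(\O)_f=1$ exactly when $f$ is oriented oppositely to $A$, read as a word whose most significant digit is the $<$-smallest edge. If $\O$ has a nonminimal directed cut $C$, then its $<$-minimum edge $e_0$ is oriented against $A$, while every other edge of $C$ exceeds $e_0$; reversing $C$ thus changes $v$ only in coordinates $\ge e_0$ and flips the $e_0$-coordinate from $1$ to $0$, so $v(\O)$ strictly decreases in lexicographic order. Since $\{0,1\}^{E}$ is finite, iteratively reversing nonminimal cuts (the greedy algorithm) terminates, and it can only terminate at an orientation with no nonminimal directed cut, i.e. a cut minimal one. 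This yields existence together with the fact that a cut minimal orientation is reached greedily.

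Uniqueness is the crux. I would first dispatch the two cheap halves of the characterization ``cut minimal $\iff$ lexicographically least in its class'': any non-cut-minimal orientation admits a strict lex decrease by the paragraph above, so it is not least; and the lex-least orientation cannot contain a nonminimal directed cut, since reversing it would decrease $v$ further, so it is cut minimal. Hence it suffices to show that each class contains \emph{at most one} cut minimal orientation. Suppose $\O_1\ne\O_2$ are both cut minimal and cut-reversal equivalent, and let $D$ be the set of edges on which they disagree. Because each reversal alters the orientation by the characteristic vector of a cut, $D$ is a cocycle, say $D=\delta(X)$; let $e=\min D$, which is oriented against $A$ in exactly one of them, say $\O_1$.

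The goal is then to exhibit a nonminimal directed cut of $\O_1$, contradicting its cut minimality. The tool I would use is uncrossing: if $\delta(Y_1)$ and $\delta(Y_2)$ are both oriented outward in a fixed orientation, then so are $\delta(Y_1\cap Y_2)$ and $\delta(Y_1\cup Y_2)$, so the directed-outward cuts separating the tail of $e$ from its head form a lattice. The hard part, and the main obstacle, is twofold. First I must guarantee that $e$ lies in \emph{some} consistently directed cut of $\O_1$ at all, equivalently that the head of $e$ cannot reach its tail by a directed path; for this I expect to feed in the cut minimality of $\O_2$ together with the fact that $D$ is a cocycle. Second, having produced a directed cut through $e$, I must uncross it down to one whose $<$-minimum edge is exactly $e$, intuitively by excluding every edge smaller than $e$, on which $\O_1$ and $\O_2$ agree, so that the resulting cut is nonminimal. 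Establishing these two points is where the real work lies; once they are in hand the contradiction is immediate. I would finally remark that this uniqueness also follows from the alternate route indicated later in the paper, where the cut minimal orientations are precisely the normal forms of a Gr\"obner basis of the graphic Lawrence ideal, for which unique normal forms are automatic.
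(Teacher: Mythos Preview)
Your existence argument is correct and, in fact, a bit cleaner than the paper's: the lexicographic potential on $\{0,1\}^E$ makes termination of the greedy procedure transparent, whereas the paper argues termination by contradiction (if the process cycles, the least edge ever flipped would have to be the minimum of two oppositely oriented cuts). The preliminary reduction by contracting the unoriented edges is also sound, since a directed cut involves only oriented edges.

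The gap is in uniqueness. You correctly identify that the symmetric difference $D$ lies in the cocycle space, but you then stop precisely at the point that matters: you do not establish either (i) that $e=\min D$ lies in some \emph{directed} cut of $\O_1$, or (ii) that one can uncross down to a directed cut whose minimum edge is $e$. Step~(i) is in fact not hard once you observe that cut reversals cannot change the orientation of any edge lying in a directed cycle, so every edge of $D$ must belong to the acyclic part and hence to some directed cut; but step~(ii) is genuinely delicate and you offer no mechanism for it. Appealing at the end to the Gr\"obner-basis remark is circular for the purposes of this theorem, since in the paper that remark is presented as an \emph{alternate} proof resting on outside machinery, not as part of the argument here.

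The paper avoids the uncrossing difficulty entirely. Its key observation is that two cut-reversal-equivalent partial orientations have indegree sequences differing by $Qf$ for some integer vector $f$ (each cut reversal is a chip-firing move), and that the level sets of $f$ yield a decomposition of the symmetric difference into an edge-disjoint union of \emph{directed} cuts. Once you have this, uniqueness is immediate: any cut in that decomposition is directed in $\O_1$ and (reversed) in $\O_2$, so its minimum edge is oriented oppositely in the two orientations and cannot agree with $A$ in both, contradicting cut minimality of one of them. I would recommend replacing your uncrossing plan with this chip-firing decomposition; it is shorter and it actually closes the argument.
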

 
 \begin{proof}
We proceed greedily by picking nonminimal directed cuts in $\O$ and reversing them.  We first claim that this process will terminate.  Supposing that this is not the case, we must without loss of generality return to $\O$.  Let $e$ be the minimum edge in $\O$ which was reversed in some cut $C$ before returning to $\O$.  This edge is minimal in $C$ as well as some other cut $C'$ which was reversed when $e$ was reoriented.  Therefore, either $C$ or $C'$ was minimal when it was reversed, a contradiction.  

To prove uniqueness, we should first prove that if two partial orientations are related by cut reversals then their symmetric difference decomposes as a disjoint collection of directed cuts from which the statement follows immediately.  If $\O$ and $\O'$ are related by cut reversals, then their corresponding indegree sequences $D_{\O}$ and $D_{\O'}$ are related by chip-firing, i.e. $D_{\O}-D_{\O'} = Qf$, where $Q$ is the Laplacian matrix and $f$ is some integer vector.  Because the kernel of the Laplacian is generated by the all one's vector, we may assume with out loss of generality that $f \geq0$ and $f(v)=0$ for some $v$.  Let $X$ be the support of $f$, then $(X,X^c)$ forms a directed cut in $\O$ which we can reverse and induct on the size of the support of $f$.

 \end{proof}
 
  \begin{figure}[h] \label{cut-minimal}
\centering
\includegraphics[height=5cm]{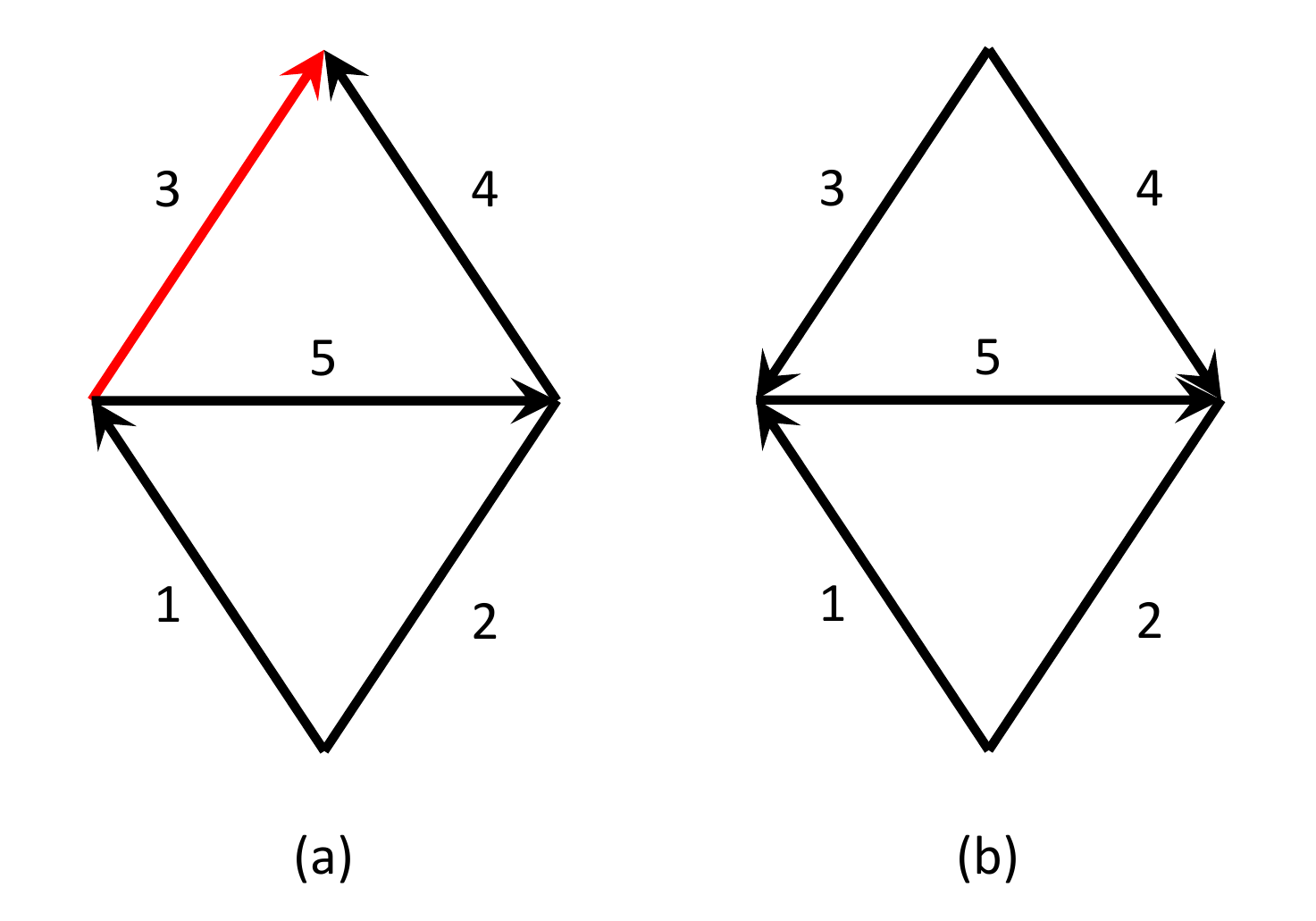}
\caption{ (a) A partial orientation which is not cut minimal with respect to $(<_1,A_1)$ (see figure \ref{pair}) because the minimum edge 3 in the directed cut $\{ 3,4\}$ is oriented oppositely from $A_1$ and (b) the cut minimal partial orientation obtained by reversing this directed cut.}
\end{figure}
 
 In the case of strongly connected full orientations, the following definition agrees with that of Greene and Zaslavsky \cite[Corollary 8.2]{greene1983interpretation}.  A different notion of a cycle-minimal full orientation, called an $\alpha$-minimal orientation appears in Bernardi \cite{bernardi2008tutte} which is constructed from a combinatorial map, i.e. a ribbon graph structure as opposed to a total order on the edges and a reference orientation. Unfortunately, providing a self contained definition of Bernardi's  $\alpha$-minimal orientations would take us beyond the scope of this paper.
 
 \begin{definition}
A directed cycle is minimal if its minimum edge is oriented in the same direction as in $A$, and nonminimal otherwise.  A partial orientation $\O$ of $G$ is {\it cycle minimal} if every directed cycle in $\O$ is minimal.
 \end{definition}
 
\begin{theorem}\label{cycle-min}
 Each partial orientation $\O$ is equivalent via cycle reversals to a unique cycle minimal partial orientation, which can be obtained greedily.
\end{theorem}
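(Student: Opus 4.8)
The plan is to mirror the structure of the proof of Theorem~\ref{cut-min}, exploiting the duality between cuts and cycles. The statement is exactly the cographic analogue: where cut reversals act by the Laplacian (lattice of cuts), cycle reversals act by the cycle lattice, and cut minimality is replaced by cycle minimality. So the proof should have two parts, termination of the greedy procedure and uniqueness of the resulting representative.

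For termination, I would repeat the argument verbatim with cuts replaced by cycles. Proceed greedily, choosing a nonminimal directed cycle and reversing it. Suppose the process fails to terminate; then without loss of generality we return to $\O$. Let $e$ be the minimum edge of $\O$ that gets reversed before returning, say in cycle $C$, and note it must be reversed back in some later cycle $C'$. Since $e$ is the globally minimum reversed edge, it is the minimum edge of both $C$ and $C'$, so at least one of these two cycles was already minimal when reversed, contradicting the choice to reverse only nonminimal cycles. This half is essentially identical to the cut case.

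For uniqueness, the key structural fact to establish is the cycle analogue of the lemma used in Theorem~\ref{cut-min}: \emph{if two partial orientations $\O$ and $\O'$ are related by cycle reversals, then their symmetric difference (the set of edges on which they disagree) decomposes as a disjoint union of directed cycles.} Granting this, uniqueness follows formally: if two distinct cycle minimal orientations were cycle-reversal equivalent, their symmetric difference would contain a directed cycle, and the minimum edge of a suitable such cycle would have to be oriented in one of the two orientations oppositely from $A$, producing a nonminimal directed cycle and contradicting cycle minimality of that orientation. To prove the decomposition itself I would work in the cycle space: cycle reversals change the orientation by an element of the integer cycle lattice (the kernel of the boundary/incidence map, spanned by signed directed cycles), and I would induct on the support size exactly as in the cut case, peeling off one directed cycle at a time. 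Concretely, an element of the cycle lattice with nonzero support contains a directed cycle in its support; reversing that cycle strictly reduces the support, and induction finishes the argument.

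The main obstacle I expect is making the symmetric-difference decomposition fully rigorous in the partial-orientation setting, since unoriented edges complicate what ``symmetric difference'' and ``directed cycle in the support'' should mean. In the cut case the proof leaned on the Laplacian and the clean fact that $D_{\O}-D_{\O'}=Qf$; there is no equally clean indegree invariant separating cycle-equivalence classes (indeed, cycle reversals preserve the indegree sequence, so a pure degree argument cannot detect them). I would therefore phrase the decomposition directly in terms of the cycle lattice over $\mathbb{Z}$, being careful that edges which are unoriented in one orientation but oriented in the other are handled correctly, and that reversing a directed cycle genuinely corresponds to adding a lattice generator. Once this lemma is in place, both termination and uniqueness are short, and the greedy construction gives the canonical cycle minimal representative.
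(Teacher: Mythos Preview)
Your proposal is correct and follows exactly the paper's approach, which simply declares the argument dual to that of Theorem~\ref{cut-min}: greedily reverse nonminimal directed cycles for existence, and for uniqueness use that the difference of two cycle-reversal-equivalent partial orientations decomposes as an edge-disjoint union of directed cycles. Your concern about unoriented edges is unnecessary, since cycle reversals only involve oriented edges and hence preserve the set of unoriented edges; the decomposition therefore reduces to the standard Eulerian (balanced in/out degree) argument on the oriented part.
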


 \begin{proof}
 The proof is the same as (dual to) the proof of Theorem \ref{cut-min}.  We can greedily reverse nonminimal cycles until we eventually reach a cycle minimal partial orientation.  Uniqueness follows from the fact that if two partial orientations differ by cycle reversals, then their difference decomposes as an edge disjoint union of directed cycles.
 \end{proof}
 
 Greene and Zaslavsky \cite[Corollary 8.3]{greene1983interpretation} note that the number of strongly connected orientations of a plane graph $G$ such that all of the cycles are orientated counterclockwise is $T(0,1)$ since there exist pairs $(<,A)$ which makes this set the collection of cycle minimal strongly connected orientations.  It seems worth noting that the set of pairs $(<,A)$ which accomplish this task are in bijection with the set of $q$-connected pairs $(<,A)$ of the dual graph as $q$-connected acyclic orientations of a plane graph induce strongly connected orientations of the dual with no counterclockwise oriented cycles and vice versa.
  
 We can similarly define a cycle-cut minimal partial orientation to be one which is both cycle minimal and cut minimal.  Because the set of edges which belong to directed cycles and directed cuts is disjoint and preserved under cut and cycle reversals, we obtain the following corollary of Theorems \ref{cut-min} and Theorem \ref{cycle-min}.
 
 \begin{corollary}\label{cycle-cut minimal}
 Every partial orientation is equivalent by cycle reversals and cut reversals to a unique cycle-cut minimal partial orientation.
 \end{corollary}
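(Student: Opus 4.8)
The plan is to decouple the two reversal operations using exactly the disjointness highlighted just before the statement. First I would record two facts. \emph{Fact A}: in any partial orientation, a directed cut $C$ and a directed cycle $\Gamma$ satisfy $E(C)\cap E(\Gamma)=\emptyset$, since every edge of a cut $(X,X^c)$ points from $X$ to $X^c$, so a closed directed walk that entered $X^c$ along such an edge could never cross back to close up. \emph{Fact B}: consequently a cut reversal leaves the collection of directed cycles (as oriented edge sets) unchanged—each directed cycle avoids the reversed cut by Fact A and is therefore untouched, while the reversed cut is again a directed cut, so every directed cycle of the new orientation also avoids those edges and hence was already present—and dually a cycle reversal leaves the collection of directed cuts unchanged. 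In particular cut reversals preserve cycle-minimality and cycle reversals preserve cut-minimality.

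For existence, given $\O$ I would first invoke Theorem \ref{cut-min} to reach, through cut reversals, the unique cut minimal orientation $\O_1$ of its cut-reversal class, and then invoke Theorem \ref{cycle-min} to reach, through cycle reversals, the unique cycle minimal orientation $\O_2$ of $\O_1$. By Fact B the cycle reversals performed in the second stage preserve cut minimality, so $\O_2$ is simultaneously cut minimal and cycle minimal, i.e.\ cycle-cut minimal, and it was obtained from $\O$ using only cut and cycle reversals.

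For uniqueness, suppose $P$ and $Q$ are both cycle-cut minimal and lie in the same cut-and-cycle reversal class. The key step is a commuting lemma: in any word of reversals carrying $P$ to $Q$, an adjacent pair consisting of a cycle reversal followed by a cut reversal may be interchanged without altering the resulting orientation. This is where Fact A does the work—in the relevant intermediate orientation the reversed cut and the (reversed) directed cycle are edge-disjoint, so the two operations act on disjoint edges, and one checks that each remains a valid directed-cut, respectively directed-cycle, reversal after the swap. Bubble-sorting the word then yields an intermediate orientation $R$ with $P \xrightarrow{\text{cut reversals}} R \xrightarrow{\text{cycle reversals}} Q$. By Fact B, $R$ is cycle minimal (being cut-reversal-equivalent to the cycle minimal $P$) and cut minimal (being cycle-reversal-equivalent to the cut minimal $Q$), hence cycle-cut minimal; Theorem \ref{cut-min} applied to the cut-reversal-equivalent pair $R,P$ forces $R=P$, and Theorem \ref{cycle-min} applied to the cycle-reversal-equivalent pair $R,Q$ forces $R=Q$, so $P=Q$.

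The step I expect to be the main obstacle is the commuting lemma: Fact A gives edge-disjointness only in a single fixed orientation, so the care lies in confirming that after a swap the cut is still a directed cut and the cycle is still a directed cycle, making the rearranged word a legal sequence of reversals; the invariance of the unoriented edge set under both operations should keep this interference-free. Everything else is a direct appeal to the uniqueness halves of Theorems \ref{cut-min} and \ref{cycle-min}.
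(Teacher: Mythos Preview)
Your argument is correct and is essentially a fleshed-out version of the paper's one-sentence justification (``the set of edges which belong to directed cycles and directed cuts is disjoint and preserved under cut and cycle reversals''): your Facts A and B make that preservation precise, and your commuting lemma is one clean way to turn the resulting independence of the two reversal systems into the uniqueness statement. The paper leaves all of this implicit, so your write-up simply supplies the details behind the same idea.
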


We now employ cut minimal and cycle minimal partial orientations to obtain formulas for  the number of partial orientations modulo cut reversals and the number of partial orientations modulo cycle reversals.

\begin{theorem}\label{cut-min-Tutte}
The number of partial orientations of a graph $G$ modulo cut reversals is $2^{n-1}T(1,3)$.
\end{theorem}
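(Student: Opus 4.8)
The plan is to count the equivalence classes of partial orientations modulo cut reversals by decomposing them according to their unoriented edge set. The crucial observation is that a cut reversal reverses only the (necessarily oriented) edges crossing a directed cut, and therefore preserves the set $U$ of unoriented edges. Hence the classes partition according to $U$, and for a fixed $U$ the partial orientations with unoriented set exactly $U$ are precisely the full orientations of the contraction $G/U$: a cut of $G$ crossed by no edge of $U$ is the same thing as a cut of $G/U$, so directed cuts and cut reversals of such a partial orientation correspond exactly to those of $G/U$. By Gioan's interpretation of $T(1,2)$ as the number of full orientations modulo cut reversals \cite{gioan2007enumerating} (equivalently, as the number of $q$-connected orientations), the number of classes with unoriented set $U$ is $T_{G/U}(1,2)$. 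Writing $f(G)$ for the number of partial orientations modulo cut reversals, this gives
\[
f(G) = \sum_{U \subseteq E(G)} T_{G/U}(1,2).
\]
By Theorem \ref{cut-min} one may equally well count the cut minimal representatives directly, arriving at the same sum.

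Next I would evaluate this sum. Expanding $T_{G/U}(1,2)$ via the corank--nullity formula, the factor $(x-1)=0$ kills every term that is not of full rank, so $T_{G/U}(1,2)$ equals the number of full-rank subsets of $G/U$, i.e.\ the number of $A \subseteq E(G)\setminus U$ such that $A \cup U$ is spanning in $G$. Summing over $U$ therefore counts ordered pairs $(A,U)$ of disjoint edge sets whose union is spanning; grouping by $S = A \cup U$ yields $f(G) = \sum_{S \text{ spanning}} 2^{|S|}$, since each spanning $S$ splits into such a pair in $2^{|S|}$ ways. Finally, $T_G(1,y) = \sum_{S \text{ spanning}} (y-1)^{|S|-(n-1)}$, so setting $y=3$ gives $\sum_{S \text{ spanning}} 2^{|S|} = 2^{n-1}T_G(1,3)$, as required.

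As a consistency check in the spirit of the weighted deletion-contraction relations used earlier, the intermediate formula $f(G) = \sum_{S \text{ spanning}} 2^{|S|}$ immediately satisfies $f(G) = f(G\setminus e) + 2f(G/e)$ for $e$ neither a loop nor a bridge: spanning sets avoiding $e$ are exactly the spanning sets of $G\setminus e$, while those containing $e$ biject with spanning sets of $G/e$, contributing an extra factor of $2$ from $e$. The loop and bridge base cases give $f(G) = 3f(G\setminus e)$ and $f(G) = 2f(G/e)$ respectively, matching the behavior of $2^{n-1}T_G(1,3)$ under the Tutte recursion evaluated at $(1,3)$.

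The main obstacle is the first step: justifying rigorously that cut reversals preserve the unoriented set, and that, fibered over $U$, the classes are genuinely in bijection with full orientations of $G/U$ modulo cut reversals, so that Gioan's $T(1,2)$ count applies verbatim even when $G/U$ acquires loops. Once this structural reduction is in place, the remaining identity $\sum_{U} T_{G/U}(1,2) = 2^{n-1}T_G(1,3)$ is a routine corank--nullity computation.
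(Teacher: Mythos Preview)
Your argument is correct, and it takes a genuinely different route from the paper. The paper works directly with the cut minimal representatives of Theorem~\ref{cut-min}: choosing a $q$-connected pair $(<,A)$, it analyzes the minimum edge $e$ and shows that cut minimal partial orientations split according to the state of $e$ into those of $G\setminus e$ and (twice) those of $G/e$, yielding the recursion $f(G)=f(G\setminus e)+2f(G/e)$ and matching the Tutte recursion for $2^{n-1}T(1,3)$.

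Your approach instead fibers the classes over the unoriented edge set $U$, identifies each fiber with full orientations of $G/U$ modulo cut reversals, invokes Gioan's $T_{G/U}(1,2)$, and then evaluates $\sum_U T_{G/U}(1,2)$ by corank--nullity as $\sum_{S\ \text{spanning}}2^{|S|}=2^{n-1}T_G(1,3)$. This is more modular: it reduces the partial case to the known full case and a clean subset-sum identity, and it never needs the auxiliary data $(<,A)$ or the careful edge-by-edge case analysis. The paper's proof, by contrast, is more self-contained---it does not import Gioan's enumeration---and it showcases the cut minimal representatives by exhibiting a deletion--contraction recursion at their level. Your ``main obstacle'' is not really an obstacle: cut reversals manifestly leave unoriented edges unoriented, cuts of $G$ missing $U$ are exactly cuts of $G/U$, and Gioan's count is valid for arbitrary connected multigraphs with loops.
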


\begin{proof}
By Theorem \ref{cut-min}, it is equivalent to prove that the number of cut minimal partial orientations, $f(G)$ is $2^{n-1}T(1,3)$.  Because this quantity is independent of $(<,A)$, we take this pair to be a $q$-connected pair.  Let $e = (q,v)$ be the minimum edge in $G$ with respect to $<$, and suppose that $e$ is not a loop or cut edge.  We claim that $f(G) = f(G \setminus e) + 2f(G/e)$.  First we note that there is a bijection between cut minimal orientations of $f(G)$ we with $e$ oriented in the opposite direction of $A$, i.e. towards $q$, and cut minimal orientations of $f(G \setminus e)$.  Also, there is a bijection between cut minimal orientations of $G$ with $e$ unoriented and cut minimal orientations of $f(G/e)$.  The same is true for cut minimal orientations of $f(G)$ with $e$ oriented in the same direction as in $A$, hence the given deletion-contraction relations holds.  For $e$ a cut edge, $f(G) = 2\cdot 1f(G/e)$ and if $e$ is a loop then $f(G) = 3f(G/e)$ from which the theorem follows.  
\end{proof}

\begin{theorem}\label{cycle-min-Tutte}
The number of partial orientations of a graph $G$ modulo cycle reversals is $2^gT(3,1)$.
\end{theorem}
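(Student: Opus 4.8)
The plan is to follow the template of Theorem \ref{cut-min-Tutte} in its dual form. By Theorem \ref{cycle-min}, the number of partial orientations of $G$ modulo cycle reversals equals the number of cycle minimal partial orientations $f(G)$, and since this count is independent of the pair $(<,A)$, I am free to work with whichever pair is most convenient. Dually to the choice of a $q$-connected pair in Theorem \ref{cut-min-Tutte}, I would fix a spanning tree $T$, assign the cotree edges the $g$ smallest labels and the tree edges the largest, and choose $<$ and $A$ so that the global minimum edge $e$ is a cotree edge that is minimal in every directed cycle containing it; for a plane graph this is exactly a $q$-connected pair of the dual graph $G^{*}$. The goal is then to establish the deletion-contraction relation
\[
f(G) = 2 f(G \setminus e) + f(G / e),
\]
which is dual to the relation $f(G) = f(G\setminus e) + 2 f(G/e)$ of Theorem \ref{cut-min-Tutte} with the roles of deletion and contraction exchanged.

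To prove the relation I would classify cycle minimal orientations by the state of the minimum edge $e$. When $e$ is unoriented it lies in no directed cycle, and when $e$ is oriented in agreement with $A$ every directed cycle through $e$ is automatically minimal because $e$ is its minimum edge; in both of these cases deleting $e$ gives a bijection onto the cycle minimal orientations of $G\setminus e$, and these two cases together contribute $2f(G\setminus e)$. When $e$ is oriented opposite to $A$, any directed cycle through $e$ would be nonminimal, so $e$ lies in no directed cycle, and contracting $e$ should yield a cycle minimal orientation of $G/e$, contributing $f(G/e)$. I would then record the boundary behaviour: a loop contributes a factor of $2$, since its two orientations are interchanged by the length-one cycle reversal and hence identified; a bridge contributes a factor of $3$ via contraction, since a bridge lies in no directed cycle and all three of its states are inequivalent. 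A routine check shows $2^{g}T(3,1)$ obeys the same recursion, using $g(G\setminus e) = g-1$, $g(G/e) = g$, and the multiplicativity of the Tutte polynomial at loops and bridges.

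The main obstacle is the contraction case, precisely the dual of the delicate deletion step in Theorem \ref{cut-min-Tutte}. The subtlety is that the naive contraction map need not land among the cycle minimal orientations of $G/e$: a directed path $P$ joining the two endpoints of $e$ becomes a directed cycle through the merged vertex in $G/e$, and this cycle can be nonminimal even though $\O$ was cycle minimal, because in $G$ the configuration $P \cup \{e\}$ is not a directed cycle and so is invisible to the cycle minimality of $\O$. The smallest example, three parallel edges, already exhibits this failure. Resolving it is where the special pair earns its keep: I would show, dually to the spanning-tree path argument of Theorem \ref{cut-min-Tutte}, that the cotree labelling forces the minimum edge of every such path $P$ to agree with $A$, so that contraction preserves cycle minimality and is a bijection; alternatively one may post-compose contraction with the greedy reminimization of Theorem \ref{cycle-min}, whose uniqueness supplies the inverse. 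Carrying out this dual path argument in a form valid for all graphs, not merely plane ones, is the step I expect to demand the most care.
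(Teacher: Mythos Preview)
Your overall strategy matches the paper's: reduce via Theorem~\ref{cycle-min} to counting cycle minimal partial orientations, establish $f(G)=2f(G\setminus e)+f(G/e)$ by classifying cycle minimal $\O$ according to the state of the minimum edge $e$, and handle loops and bridges separately. Your two deletion cases ($e$ unoriented, or $e$ oriented as in $A$) and the boundary cases are fine and agree with the paper.

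Where you diverge is in the choice of $(<,A)$, and this is exactly the step you flagged as delicate. You propose a global cotree-first labeling meant to dualize the $q$-connected pair; as you yourself note, this only dualizes cleanly for plane graphs, and for general graphs your sketch does not pin down $A$ enough to force the minimum edge of every $v$-to-$u$ path to agree with $A$ (indeed, if the path is the tree path it contains no cotree edge at all, and nothing in your labeling controls which tree edge is smallest or how it is oriented). Your alternative of post-composing with the greedy reminimization of Theorem~\ref{cycle-min} does not obviously give a bijection either, since cycle-reversal equivalence in $G/e$ need not lift to cycle-reversal equivalence in $G$.

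The paper sidesteps this with a much more local construction. For an ordinary edge $e=(q,v)$ it takes $e$ as the minimum edge, oriented toward $v$ in $A$, and then assigns the remaining edges incident to $q$ the labels $2,\ldots,\deg(q)$, all oriented toward $q$ in $A$; the rest of $(<,A)$ is arbitrary. Now any simple path $P$ between $q$ and $v$ has exactly one edge at $q$, and that edge is the minimum of $P$. In the contraction case ($e$ oriented toward $q$, i.e.\ opposite to $A$), a directed path $q\to v$ together with $e$ would already be a nonminimal cycle in $G$, so no such path exists; a directed path $v\to q$ ends with an edge into $q$, hence its minimum edge agrees with $A$, so the new cycle in $G/e$ is minimal. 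Conversely, starting from a cycle minimal orientation of $G/e$, any would-be directed cycle through $e$ in $G$ comes from a directed path $q\to v$ beginning with an edge out of $q$, which is a nonminimal cycle in $G/e$, a contradiction. This local trick works for all graphs and makes the contraction bijection immediate.
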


\begin{proof}
By Theorem \ref{cycle-min}, it is equivalent to prove that the number of cycle minimal partial orientations, $f(G)$ is $2^gT(3,1)$.  We will show that $f(G) = 2f(G \setminus e) + f(G/e)$.  Let $e = (q,v)$ be neither a bridge nor a loop.  We first construct a pair $(<,A)$ by taking $e$ to be the minimum labeled edge and orient it towards $v$.  We then label the rest of the edges incident to $q$, $2$ through deg$(q)$ and orient them towards $q$.  We now extend this arbitrarily to a pair $(<,A)$.  We first observe that there is a bijection between cycle minimal orientations with $e$ oriented towards $q$ and cycle minimal orientations of $G / e$.  There is a bijection between cycle minimal orientations of $G$ with $e$ oriented towards $v$ and cycle minimal orientations of $G \setminus (q,v)$.  The same is true for $(q,v)$ unoriented, hence the deletion-contraction holds.  Finally, if $e$ is a cut edge, then $f(G) = 3f(G \setminus e)$ and if $e$ is a loop, $f(G) = 2 
\cdot 1 f(G/e)$.
\end{proof}

   We now show how an arbitrary pair $(<,A)$ naturally picks out a distinguished partial orientation representing a given indegree sequence.  We call a directed path with the addition of an unoriented edge incident to the terminal vertex a {\it half open path}.  For discussing the orientation of the edges in a half open path, we imagine that the unoriented edge has the orientation which would allow us to extend the given path.  Note that an unoriented edge may have different orientations depending on which half open path we are considering.  We say that a {\it half open path} is minimal if its minimum edge is oriented in the same direction as in $A$.  If a half open path is nonminimal, we can perform a sequence of edge pivots to reverse the orientation of its edges.  We call such a sequence of edge pivots, a {\it Jacob's ladder cascade} \cite[Definition 3.2]{backman2014riemann}, but for the sake of brevity, we will usually refer to this operation as a {\it cascade}.   See Figure \ref{jacob'sladder6}.
   
 \begin{figure}[h]

\centering
\includegraphics[height=7.5cm]{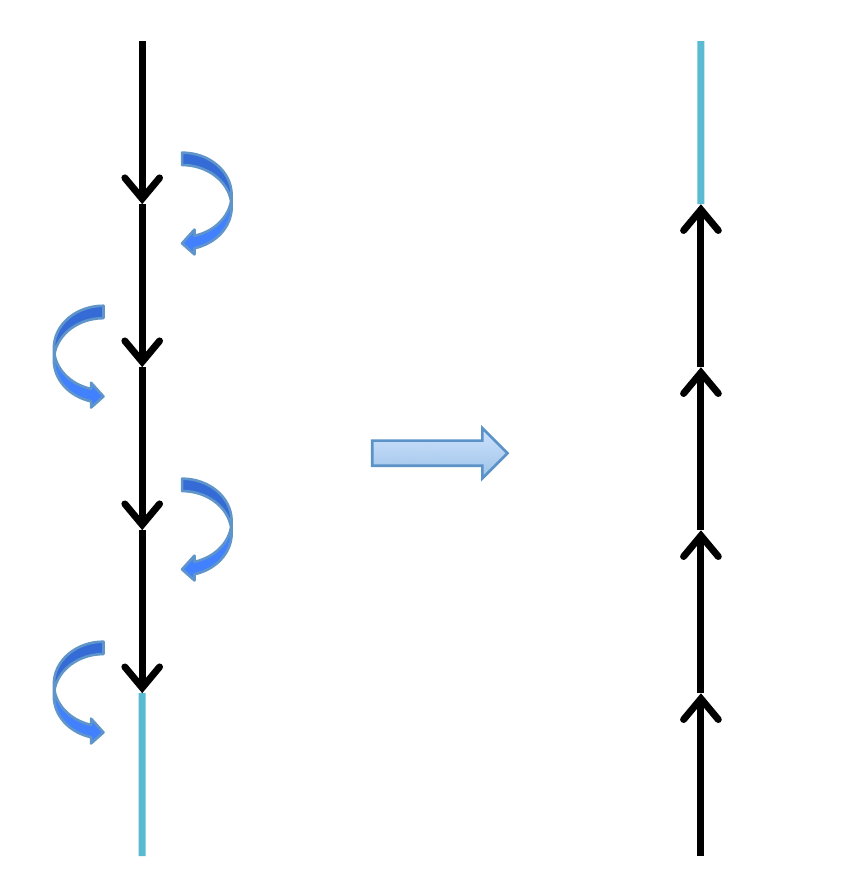}
\caption{Two half-open paths related by a Jacob's ladder cascade.}\label{jacob'sladder6}

\end{figure}  
   
   \begin{definition}
   A partial orientation is cycle-path minimal if every directed cycle is minimal and every half-open path is minimal.
   \end{definition}
   
      As shown in the author's previous paper \cite[Lemma 3.1]{backman2014riemann}, two partial orientations have the same indegree sequence if and only if one can be obtained from the other a collection of edge pivots and cycle reversals.  Before demonstrating the existence and uniqueness of cycle-path minimal partial orientations, we will need the following strengthening of this result.
      
     \begin{lemma}\label{disjoint}
     Any two partial orientations which have the same indegree sequence are related by an edge disjoint union of cycle reversals and Jacob's ladder cascades. 
     \end{lemma}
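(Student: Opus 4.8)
The plan is to encode the difference between the two partial orientations as a single balanced (Eulerian) auxiliary digraph and then read off the desired operations from an Euler decomposition into simple directed cycles. Fix partial orientations $\O$ and $\O'$ with $D_\O = D_{\O'}$. Every edge on which $\O$ and $\O'$ disagree is of exactly one of three types: a \emph{reversal} (oriented in opposite directions by $\O$ and $\O'$), an \emph{orientation} (unoriented in $\O$, oriented in $\O'$), or an \emph{unorientation} (oriented in $\O$, unoriented in $\O'$). I would build a digraph $H$ on $V(G)$ by inserting, for each reversal edge, the arc pointing in its $\O'$-direction. To account for the other two types I would adjoin a single auxiliary vertex $\star$: for each orientation edge with head $w$ in $\O'$ add an arc $\star \to w$, and for each unorientation edge with head $w$ in $\O$ add an arc $w \to \star$. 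Call the resulting digraph $H^+$.

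First I would check that $H^+$ is balanced, i.e. every vertex has equal in- and out-degree. At a vertex $w \neq \star$, the net contribution of the reversal edges to $D_{\O'}(w) - D_\O(w)$ is exactly (arcs into $w$) $-$ (arcs out of $w$), while the orientation and unorientation edges contribute $+1$ and $-1$ respectively at their heads. Since $D_\O = D_{\O'}$, these cancel, which is precisely the statement that the in-degree and out-degree of $w$ in $H^+$ agree. Summing over all $w$ shows that the numbers of orientation and unorientation edges are equal, so $\star$ is balanced as well. Hence $H^+$ is Eulerian and its arc set partitions into edge-disjoint simple directed cycles.

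Next I would translate each simple cycle back into an operation on $\O$. A cycle avoiding $\star$ consists entirely of reversal arcs and is a directed cycle in $\O'$; the same edges form a directed cycle in $\O$, and reversing it is a cycle reversal carrying those edges from their $\O$-states to their $\O'$-states. A simple cycle through $\star$ enters $\star$ along an unorientation arc $w \to \star$ and leaves along an orientation arc $\star \to w'$, with a directed path of reversal arcs $w' \to \cdots \to w$ in between. Reading these edges in $\O$, I would verify that they assemble into a half-open path (a directed path in $\O$ terminating at the vertex incident to the unoriented orientation-edge), and that the associated Jacob's ladder cascade is exactly the move taking these edges to their $\O'$-states. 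Because the cycles are edge-disjoint and together cover precisely the edges where $\O$ and $\O'$ differ, performing all of these (commuting, edge-disjoint) cycle reversals and cascades transforms $\O$ into $\O'$, which gives the claimed decomposition.

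The main obstacle, and the step I would take most care with, is the bookkeeping in the correspondence between $\star$-cycles and Jacob's ladder cascades: one must confirm both that the orientation and unorientation arcs attach at the correct ends of the reversal path (so that the edges really form a half-open path in $\O$ with the unoriented edge at the terminal vertex) and that the cascade's net effect matches the $\O \to \O'$ change edge by edge. Working out the net edge-by-edge effect of a single cascade once, at the start, makes this identification routine. A secondary point to address is the simplicity of the resulting half-open paths: the interior path vertices are distinct because the $H^+$-cycle is simple, but one should note that the two extra endpoints could in principle coincide with path vertices, and check that the cascade remains well defined in that case, since it is built from edge pivots along an edge-disjoint trail. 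One could instead try to deduce the result by massaging an arbitrary sequence of edge pivots and cycle reversals coming from \cite[Lemma 3.1]{backman2014riemann} into edge-disjoint form, but the Eulerian encoding seems cleaner and more transparent.
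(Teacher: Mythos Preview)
Your argument is correct and takes a genuinely different route from the paper's.  The paper proceeds by induction on the size of the symmetric difference $S$: it picks an edge unoriented in $\O$ but oriented toward some $v$ in $\O'$, walks backward from $v$ through reversal edges in $S$, and stops either when the walk revisits a vertex (yielding a directed cycle) or when it hits an edge unoriented in $\O'$ (yielding a half-open path); performing the corresponding reversal or cascade strictly shrinks $S$, and the induction continues.  Your approach is global rather than inductive: by attaching the auxiliary vertex $\star$ and decomposing the resulting balanced digraph into simple directed cycles, you obtain the entire edge-disjoint family of cycle reversals and cascades in one stroke.  The edge-disjointness of the operations is then immediate from the Euler decomposition, whereas in the paper it is implicit in the fact that each step removes its edges from $S$.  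Your construction also makes transparent why the numbers of ``orientation'' and ``unorientation'' edges agree, which in the paper's argument is used only tacitly.  The paper's approach, on the other hand, is slightly more elementary in that it avoids the auxiliary-vertex encoding and any appeal to Euler decompositions; it also sidesteps the simplicity wrinkle you flag, since the backward walk is stopped the first time a vertex repeats or a terminal unoriented edge is found.  Your acknowledgment that the cascade remains well defined along an edge-disjoint trail even if the half-open ``path'' is not vertex-simple is the right way to dispose of that wrinkle.
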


\begin{proof}

Let $\O$ and $\O'$ be two partial orientations which are related by edge pivots and cycle reversals.  Let $S$ be the set of edges in $\O$ with different orientations in $\O'$, and $e$ be an unoriented edge belonging to $S$ which is oriented toward $v$ in $\O'$.  Begin a directed walk backwards from $v$ in $S$ along edges which are oriented oppositely in $\O'$.  Eventually this path will either return to some vertex already visited, giving a directed cycle $C$, or it will terminate at some vertex incident to an edge which would extend the path, but is unoriented in $\O'$.  In the former case we can reverse this directed cycle and induct on $|S|$.  In the latter case we have a half open path such that we can perform the corresponding cascade and again induction on $|S|$.

\end{proof}

   \begin{theorem}\label{cycle-path-min}
 Each partial orientation $\O$ is equivalent via cycle reversals and edge pivots to a unique cycle-path minimal partial orientation.
    \end{theorem}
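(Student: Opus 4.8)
The plan is to follow the template of Theorems \ref{cut-min} and \ref{cycle-min}: establish existence by a terminating greedy reduction, and uniqueness by combining Lemma \ref{disjoint} with a minimum-edge argument. Throughout I will use that cycle reversals and edge pivots, hence cascades, preserve the indegree sequence, so the entire reduction stays inside the equivalence class of $\O$ and the cascades performed are genuine sequences of edge pivots.

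For existence, starting from $\O$ I repeatedly reverse a nonminimal directed cycle or perform the cascade associated to a nonminimal half-open path, stopping only when no nonminimal cycle and no nonminimal half-open path remain; the terminal orientation is then cycle-path minimal by definition. The one point requiring care is termination. Since there are finitely many partial orientations, a nonterminating process would revisit some orientation, and we may assume it returns to $\O$. Let $S$ be the set of edges whose orientation or oriented/unoriented status changes at some moment during this loop, and let $e = \min S$. Any operation in the loop that alters $e$ alters every edge of the cycle or half-open path it reverses, so all of those edges lie in $S$ and hence exceed $e$; thus $e$ is the minimum edge of that cycle or half-open path. Because we only reverse nonminimal configurations, such an operation finds $e$ oriented oppositely to $A$ (reading the imagined orientation when $e$ happens to be the unoriented edge) and leaves it oriented in agreement with $A$. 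Consequently $e$ can be altered at most once during the loop, so its state at the end differs from its state in $\O$, contradicting the return to $\O$.

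For uniqueness, suppose $\O_1$ and $\O_2$ are cycle-path minimal and related by cycle reversals and edge pivots; they then share an indegree sequence, so Lemma \ref{disjoint} expresses the transition from $\O_1$ to $\O_2$ as an edge-disjoint union of cycle reversals and Jacob's ladder cascades. If $\O_1 \neq \O_2$ this collection is nonempty; let $e$ be its globally minimum edge. By edge-disjointness, $e$ lies in exactly one piece and is the minimum edge of that piece. That piece is a directed cycle or half-open path of $\O_1$ which, after reversal, is a directed cycle or half-open path of $\O_2$, and $e$ is oriented oppositely (or has its oriented/unoriented status swapped) in the two orientations. Cycle-path minimality of both $\O_1$ and $\O_2$ would force the minimum edge $e$ of this piece to agree with $A$ in each of them simultaneously, which is impossible. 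Hence $\O_1 = \O_2$.

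The main obstacle is the termination step, where the bookkeeping of the unoriented edge inside a half-open path must be reconciled with the orientation-based notion of minimality; once one checks that a cascade on a nonminimal half-open path flips its minimum edge into agreement with $A$ in exactly the same way as reversing a nonminimal cycle, the minimum-edge argument runs uniformly over both operation types. The uniqueness step is comparatively routine given the edge-disjoint decomposition of Lemma \ref{disjoint}, and indeed this is precisely the reason the earlier equivalence had to be strengthened to an edge-disjoint statement: without disjointness the minimum edge $e$ could be disturbed by a second piece, and the clean ``it cannot agree with $A$ in both orientations'' conclusion would break down.
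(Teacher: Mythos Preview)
Your argument follows the paper's approach closely: greedy reduction for existence with a minimum-edge termination argument, and uniqueness via Lemma~\ref{disjoint}. (The paper phrases uniqueness slightly differently---first arguing that no cascades can appear in the edge-disjoint decomposition and then invoking Theorem~\ref{cycle-min}---but your global-minimum-edge version is equivalent.)

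One small wrinkle in the termination step: your claim that a cascade on a nonminimal half-open path ``leaves $e$ oriented in agreement with $A$'' is not quite right when $e$ happens to be the \emph{initial} oriented edge of the path, since the cascade turns that edge into the new unoriented edge. In that case $e$ could in principle be altered a second time, as the terminal unoriented edge of a later nonminimal half-open path. However, that second cascade then orients $e$ in agreement with $A$, after which $e$ can never again be the minimum of a nonminimal configuration. So the state of $e$ can only progress along $\text{opposite to }A \to \text{unoriented} \to \text{agreeing with }A$ and never return to its initial state, and the contradiction with returning to $\O$ still goes through. The paper's own proof is equally terse on this point (``could not have changed orientation twice''), so you are in good company, but it is worth making the monotonicity explicit.
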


   \begin{proof}
We first prove existence.  Given a partial orientation $\O$, we may perform cascades and cycle reversals greedily to eliminate all nonminimal half-open paths and cycles.  Supposing that this process does not terminate, we must, without loss of generality, return to $\O$.  As in previous arguments we restrict attention to the minimum labeled edge  in $\O$ (possibly unoriented) which was part of a cascade or a cycle reversal before returning to $\O$ and we find that it could not have changed orientation twice, a contradiction.

Suppose that there exists two cycle-path minimal partial orientations, which are related by edge pivots and cycle reversals.  By Lemma \ref{disjoint} they are related by an edge disjoint union of cascades and cycle reversals.  Any half open path appearing in this set is minimal, contradiction the fact that the half open path obtained by a cascade in $\O'$ is also minimal.  Therefore the set of cascades is empty and we reduce to Theorem \ref{cycle-min}.

     \end{proof}
    
  As mentioned above, any two partial orientations with the same indegree sequence are equivalent by cycle reversals and edge pivots, hence cycle-path minimal orientations provide distinguished partial orientations with a given indegree sequence.  The number of indegree sequences of full orientations is counted by $T(2,1)$, but for partial orientations there does not seem to be a simple formula in terms of the Tutte polynomial.  One basic reason is that there exist different trees on the same number of vertices with different numbers of indegree sequences coming from partial orientations.  For example, the path on three edges has 21 indegree sequences while the star on three edges has 20. 
  
  \section{Lawrence Ideals}\label{secbinomial}
  
Mohammadi and Shokrieh \cite{mohammadi2013divisors} and Kateri, Mohammadi,  and Sturmfels \cite{kateri2014family} investigated the Lawrence ideals associated to the cut and cycle lattices respectively and proved that these ideals have universal Gr\"obner bases which encode directed cut reversals and cycle reversals.  The latter set of authors were motivated by a connection with algebraic statistics described in \cite{drton2009lectures}.   We briefly recall the construction of these ideals, their distinguished Gr\"obner bases, and explain how they lend themselves to alternate proofs of Theorem \ref{cut-min} and Theorem \ref{cycle-min}.

Given a lattice $\Lambda \subset \mathbb{Z}^m$, we can associate a {\it Lawrence ideal}, $I_{\Lambda} = < x^{u^+}y^{u^-}-x^{u^-}y^{u^+}: u \in \Lambda>$, where $u^+$ and $u^-$ are the positive and negative parts of the vector $u$ respectively.  Fix a pair $(<,A)$.  Let $e_i$ be the $i$-th edge with respect to $<$ which is oriented as in $A$ and $\bar e_i$ to be the $i$-th edge with the opposite orientation.  Let $R= K[x_1, \dots, x_m,y_1,\dots, y_m]$ be the polynomial ring in $2m$ variables.   We define a map $\phi$ from oriented edges to variables: $\phi(e_i) = x_i$, $\phi({\bar e}_i) = y_i$, and we extend this to partial orientations by setting $\phi(\O_1 \sqcup \O_2)= \phi(O_1)\phi(O_2)$.  Given a directed cut $B$, we define $\bar B$ to be the reverse directed cut, and given a directed cycle $C$, we define $\bar C$ to be the reverse directed cycle.  We let

$$\mathcal{B} = \{ \phi(B)-\phi({\bar B}) : B {\rm \,\,is \,\,a \,\,directed \,\,cut} \}$$

$$\mathcal{C} = \{ \phi(C)-\phi({\bar C}) : C {\rm \,\, is\,\, a\,\, directed\,\, cycle} \}$$

\

  It was shown in \cite[Proposition 7.8]{mohammadi2013divisors} and \cite[Lemma 3.1]{kateri2014family} that the set of binomials in $B$ and $C$ associated to minimal cuts and simple cycles form universal Gr\"obner bases for the Lawrence ideal associated to the cut and cycle lattices respectively.  In particular this shows that we can change our input data $(<,A)$ by permuting the first $m$ and second $m$ variables similarly, or interchanging $x_i$'s for $y_i$'s, and our cuts and cycles remain a reverse lexicographic Gr\"obner basis.  Encoding a partial orientation as a squarefree monomial and interpreting division by elements of $B$ and $C$ as cut reversals and cycle reversals, we obtain Theorem \ref{cut-min} and Theorem \ref{cycle-min} because division by a Gr\"obner basis always yields a unique remainder.

Similar to the way we are able to encode directed cut reversals and cycle reversal using binomials, we can also describe Jacob's ladder cascades.  Given a half open path $P$, we define $\bar P$ to be the half open path obtained by the corresponding cascade, and take

$$\mathcal{P} = \{ \phi(P)-\phi({\bar P}) : P {\rm \,\,is\,\, a\,\, half\,\, open\,\, path} \}.$$ 

\

By Theorem \ref{cycle-path-min}, we find that the set $C \cup P$ is reverse lexicographic Gr\"obner basis for the ideal which it generates.  We conjecture that this set is also a universal Gr\"obner basis, although they are not obviously Lawrence ideals so {\it a priori} the seemingly relevant \cite[Theorem 7.1]{sturmfels1996grobner} of Sturmfels does not apply.

\section{Edge Chromatic Extensions and the Reliability Polynomial}\label{secchromatic}
 
In this section we describe ``edge chromatic" generalizations of the previous enumerations and illustrate a relationship with the reliability polynomial.  We define a {\it (k,l)-chromatic partial orientation} to be a partial orientation obtained by assigning each of the oriented edges one of $k$ colors and each of the unoriented edges one of $l$ colors.  We leave it to the reader to verify the following generalizations of our previous results.
 
 \begin{itemize}
 
 \item The number of $(k,l)$-chromatic acyclic partial orientations of a graph is 
 $$ k^{n-1}(k+l)^gT({2k+l \over k},{l \over k+l}).$$
 
 \item The number of $(k,l)$-chromatic strongly connected partial orientations of a graph is 
 $$(k+l)^{n-1}k^gT({l \over k+l}, {2k+l \over k}).$$ 
 
  \item  The number of $(k,l)$-chromatic cycle minimal partial orientations of a graph $G$ is 
  $$k^{n-1}(k+l)^gT({2k+l \over k},1).$$
  
   \item The number of $(k,l)$-chromatic cut minimal partial orientations of a graph is 
 $$(k+l)^{n-1}k^gT(1,{2k+l \over k}).$$
  
 \end{itemize}
 
 We note that if we plug in $(k,l) = (1,0)$, we recover previously known results for full orientations.

 The reliability polynomial of an undirected graph is $R(p)= (1-p)^{n-1}p^{g}T(1, {1\over p})$.  If we remove each edge in the graph with probability $p$, then $R(p)$ is the probability that our remaining subgraph is connected and spans the vertices.  For $p$ a rational number, this is easy to prove using weighted deletion-contraction techniques, e.g. as illustrated in earlier proofs, and the general statement follows by continuity.  We now show that $(k,l)$-chromatic cut minimal partial orientations of a graph allow for a different interpretation of $R(p)$ for all probabilities with $0 < p < {1 \over 2}$.  
 
 \begin{theorem}\label{reliable}
 Let $p$ be a rational probability between 0 and 1/2, and write $p= {k \over 2k+l}$ with $k$ and $l$ positive integers.   The evaluation of the reliability polynomial, $R(p)$ is the probability that a randomly chosen $(k,l)$-chromatic partial orientation is cut-minimal.
\end{theorem}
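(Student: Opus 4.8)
The plan is to compute the desired probability directly as a ratio of two counts and then simplify it to the closed form defining $R(p)$. First I would fix the uniform probability measure on the set of all $(k,l)$-chromatic partial orientations of $G$. Since each edge may independently be oriented in one of two directions with one of $k$ colors, or left unoriented with one of $l$ colors, there are exactly $2k+l$ choices per edge and hence $(2k+l)^m$ such objects in total. This denominator is the only genuinely new ingredient; the numerator is supplied by an enumeration already recorded above.

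Next I would invoke the formula for the number of $(k,l)$-chromatic cut minimal partial orientations, namely $(k+l)^{n-1}k^{g}\,T(1,\tfrac{2k+l}{k})$, which I may take from the bulleted list preceding this theorem. The probability that a uniformly random $(k,l)$-chromatic partial orientation is cut minimal is therefore the ratio
$$
\frac{(k+l)^{n-1}\,k^{g}\,T\!\left(1,\tfrac{2k+l}{k}\right)}{(2k+l)^{m}}.
$$

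It then remains to match this against $R(p)=(1-p)^{n-1}p^{g}T(1,\tfrac1p)$ under the substitution $p=\tfrac{k}{2k+l}$. The key elementary identities are $1-p=\tfrac{k+l}{2k+l}$ and $\tfrac1p=\tfrac{2k+l}{k}$, which immediately reconcile the two Tutte evaluations and the factors $(k+l)^{n-1}$ and $k^{g}$. The single point that makes the powers of $2k+l$ collapse correctly is the relation $g=m-n+1$, equivalently $(n-1)+g=m$; this is precisely what turns the product $(2k+l)^{n-1}(2k+l)^{g}$ hidden inside $(1-p)^{n-1}p^{g}$ into the denominator $(2k+l)^{m}$. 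Substituting and cancelling reproduces the displayed ratio, completing the argument. As a preliminary I would also note (or cite the theorem's own hypothesis) that every rational $p\in(0,\tfrac12)$ does admit such a representation, e.g. writing $p=a/b$ in lowest terms and taking $k=a$, $l=b-2a>0$.

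Honestly there is little genuine difficulty here once the enumeration of cut minimal objects is granted: the statement is essentially a repackaging of that count in probabilistic language, so the only step requiring care is the bookkeeping of exponents, with $(n-1)+g=m$ as the hinge. If one insisted on a fully self-contained argument, the real labor would be reproving the $(k,l)$-chromatic cut minimal count by a weighted deletion-contraction recurrence in the spirit of Theorem \ref{cut-min-Tutte}, and I would expect that — rather than the probability computation — to be the main obstacle.
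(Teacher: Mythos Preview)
Your proposal is correct and follows essentially the same route as the paper: form the ratio of the known count $(k+l)^{n-1}k^{g}T(1,\tfrac{2k+l}{k})$ to the total $(2k+l)^{m}$, then substitute $p=\tfrac{k}{2k+l}$ and use $m=(n-1)+g$ to collapse the exponents into the reliability polynomial. Your write-up is in fact a bit more explicit about why the powers match up, but the argument is the same.
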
 

\begin{proof}
The probability that a randomly chosen $(k,l)$-chromatic partial orientation is cut-minimal is  $${ (k+l)^{n-1}k^gT(1,{{2k+l \over k}})\over (2k+l)^m} =  $$
$$({k+l\over 2k+l})^{n-1} ({k\over 2k+l})^g T(1,{2k+l \over k} )= $$
$$(1-p)^{n-1}p^{g}T(1, {1\over p}). $$
\end{proof}

 \begin{corollary}
If we orient each edge in a graph $G$ with probability $p$ in either direction and leave it unoriented with probability $1- 2p$, then $R(p)$ is the probability that the randomly chosen partial orientation is cut minimal.
 \end{corollary}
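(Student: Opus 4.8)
The plan is to recognize this corollary as the uncolored shadow of Theorem \ref{reliable}: the random orientation model described here is precisely the marginal distribution that the uniform $(k,l)$-chromatic model induces on underlying partial orientations, and the property of being cut minimal is insensitive to the color data. So almost all of the work has already been done, and what remains is to match the two sampling schemes and handle the passage from rational to arbitrary $p$.

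First I would fix a rational $p \in (0, 1/2)$ and write $p = k/(2k+l)$ with $k,l$ positive integers, so that $1-2p = l/(2k+l)$. I then set up the random partial orientation of the corollary: each edge is independently oriented in a prescribed one of its two directions with probability $p$, or left unoriented with probability $1-2p$. For a fixed partial orientation $\O$ with $o(\O)$ oriented edges and $m - o(\O)$ unoriented edges, its probability under this model is $p^{o(\O)}(1-2p)^{m-o(\O)} = k^{o(\O)} l^{m-o(\O)} / (2k+l)^m$.

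The key step is to compare this with the uniform distribution on $(k,l)$-chromatic partial orientations. Each edge has $2k+l$ equally likely states ($2k$ oriented, namely two directions times $k$ colors, and $l$ unoriented), and there are $(2k+l)^m$ chromatic objects in all; the number of chromatic partial orientations lying over a fixed uncolored $\O$ is $k^{o(\O)} l^{m-o(\O)}$, so the uniform chromatic model assigns $\O$ probability $k^{o(\O)} l^{m-o(\O)}/(2k+l)^m$, exactly matching the orientation-model probability. Crucially, whether a partial orientation is cut minimal depends only on which edges are oriented and in which direction, never on the colors, so the event ``cut minimal'' is identical in the two models. Hence the probability of cut-minimality under the orientation model equals that under the uniform $(k,l)$-chromatic model, which by Theorem \ref{reliable} equals $R(p)$.

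Finally I would extend the identity from rational to all $p \in (0,1/2)$. The orientation-model probability is $\sum_{\O \text{ cut minimal}} p^{o(\O)}(1-2p)^{m-o(\O)}$, a polynomial in $p$, and $R(p)$ is also a polynomial; since the two agree on the dense set of rationals in $(0,1/2)$, they agree throughout the interval. I expect the only real obstacle to be the bookkeeping around this last continuity step, in particular confirming that the rationality hypothesis in Theorem \ref{reliable} does not weaken the conclusion. The substantive idea, that cut-minimality is color-blind and that the two sampling procedures induce the same law after marginalizing out the colors, is the heart of the argument and requires no genuinely new input.
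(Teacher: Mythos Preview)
Your proposal is correct and follows essentially the same approach as the paper, which proves the corollary in a single sentence: ``This follows from Theorem \ref{reliable} by continuity.'' You have simply unpacked the implicit step linking the uniform $(k,l)$-chromatic model to the Bernoulli orientation model via color-blindness of cut-minimality, and made the continuity argument explicit by observing that both sides are polynomials in $p$.
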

 
 \begin{proof}
 This follows from Theorem \ref{reliable} by continuity.
 \end{proof}

 In \cite{mohammadi2014divisors} Mohammadi investigates system reliability in the context of directed cuts and partial orientations via combinatorial commutative algebra.  It would be interesting if one could apply the setup from her paper to give an algebraic reinterpretation of the previous result.

 \section*{Acknowledgments}
 Thanks to Yan X Zhang for helpful discussions about strongly connected partial orientations and Bernardi's $\alpha$-minimal orientations, and to Sam Hopkins and Dave Perkinson for explaining their work to me.  Additional thanks to Sam for suggesting that I investigate the relationship between my work and the reliability polynomial.  Thanks to Farbod Shokrieh for pointing out that the ideals described in section \ref{secbinomial} are Lawrence ideals, and to Fatemeh Mohammadi for providing further references.  I would like to express my gratitude to the Center for Applications of Mathematical Principles at the National Institute for Mathematical Sciences in Daejeon, South Korea where I was supported while this work was completed during the Summer 2014 Program on {\it Applied Algebraic Geometry}.

\bibliography{partialorientations}

\

School of Mathematics, Georgia Institute of Technology

 Atlanta, Georgia 30332-0160, USA

 email address: {\it spencerbackman@gmail.com}

\end{document}